\definecolor{webblue}{rgb}{0,.5,0}
\definecolor{webred}{rgb}{0,.5,0}
\definecolor{webbrown}{rgb}{.6,0,0}
\newtheorem{thm}{Theorem}[section]
\newtheorem{lem}[thm]{Lemma}
\theoremstyle{definition}
\newtheorem{rem}[thm]{Remark}
\numberwithin{equation}{section}
\newcommand{\D}{\displaystyle}
\newcommand{\DF}[2]{\frac{\D#1}{\D#2}}
\newcommand{\sep}{\preceq}
\title{On a Stirling-Whitney-Riordan triangle \thanks{Supported partially by the National Natural
Science Foundation of China (Nos. 11971206, 12022105), the Natural
Science Fund for Distinguished Young Scholars of Jiangsu Province
(No. BK20200048) and the Young Talents Invitation Program of
Shandong Province.
\newline\hspace*{5mm}
   {\it Email address:} bxzhu@jsnu.edu.cn (B.-X. Zhu)}}
\author{Bao-Xuan Zhu}
\date{\footnotesize School of Mathematics and Statistics,
         Jiangsu Normal University,
         Xuzhou 221116, PR China}
\begin{document}

\maketitle

\begin{abstract}
Based on the Stirling triangle of the second kind, the Whitney
triangle of the second kind and one triangle of Riordan, we study a
Stirling-Whitney-Riordan triangle $[T_{n,k}]_{n,k}$ satisfying the
recurrence relation:
\begin{eqnarray*}
T_{n,k}&=&(b_1k+b_2)T_{n-1,k-1}+[(2\lambda b_1+a_1)k+a_2+\lambda(
b_1+b_2)] T_{n-1,k}+\\
&&\lambda(a_1+\lambda b_1)(k+1)T_{n-1,k+1},
\end{eqnarray*}
where initial conditions $T_{n,k}=0$ unless $0\le k\le n$ and
$T_{0,0}=1$.

We prove that the Stirling-Whitney-Riordan triangle
$[T_{n,k}]_{n,k}$ is $\textbf{x}$-totally positive with
$\textbf{x}=(a_1,a_2,b_1,b_2,\lambda)$. We show that the
row-generating function $T_n(q)$ has only real zeros and the
Tur\'{a}n-type polynomial $T_{n+1}(q)T_{n-1}(q)-T^2_n(q)$ is stable.
We also present explicit formulae for $T_{n,k}$ and the exponential
generating function of $T_n(q)$ and give a Jacobi continued fraction
expansion for the ordinary generating function of $T_n(q)$.
Furthermore, we get the $\textbf{x}$-Stieltjes moment property and
$3$-$\textbf{x}$-log-convexity of $T_n(q)$ and  show that the
triangular convolution $z_n=\sum_{i=0}^nT_{n,i}x_iy_{n-i}$ preserves
Stieltjes moment property of sequences. Finally, for the first
column $(T_{n,0})_{n\geq0}$, we derive some properties similar to
those of $(T_n(q))_{n\geq0}.$
\bigskip\\
{\sl \textbf{MSC}:}\quad 05A20; 05A15; 11A55; 15B48; 26C10; 30B70;
44A60
\bigskip\\
{\sl \textbf{Keywords}:}\quad Total positivity; Real zeros; Jacobi
continued fractions;  Hakel matrices; $\textbf{x}$-Stieltjes moment
sequences; Convolutions; $3$-$\textbf{x}$-log-convexity; Stirling
numbers; Whitney numbers
\end{abstract}

\section{Introduction}
%%%%%%%%%%%%%%%%%%%%%%%%%%%%%%%%%%%%%%
\subsection{Stirling numbers of the second kind}

Let $\left\{
  \begin{array}{ccccc}
    n \\
   k\\
  \end{array}
\right\}$ denote the Stirling number of the second kind. It
enumerates the number of partitions of a set with $n$ elements
consisting of $k$ disjoint nonempty sets. It is well-known that the
Stirling number of the second kind satisfies the recurrence relation
\begin{equation*}
\left\{
  \begin{array}{ccccc}
    n \\
   k\\
  \end{array}
\right\}=k\left\{
  \begin{array}{ccccc}
    n-1 \\
   k\\
  \end{array}
\right\}+\left\{
  \begin{array}{ccccc}
    n-1 \\
   k-1\\
  \end{array}
\right\},
\end{equation*}
where initial conditions $\left\{
  \begin{array}{ccccc}
    0 \\
   0\\
  \end{array}
\right\}=1$ and $\left\{
  \begin{array}{ccccc}
    0 \\
   k\\
  \end{array}
\right\}=0$ for $k\geq1$ or $k<0$. The triangular array
$\left[\left\{
  \begin{array}{ccccc}
    n \\
   k\\
  \end{array}
\right\}\right]_{n,k\geq0}$ is called {\it the Stirling triangle of
the second kind}. Its row-generating function, {\it i.e., the Bell
polynomial}, is defined to be
$
B_n(x)=\sum_{k=0}^{n}\left\{
  \begin{array}{ccccc}
    n \\
   k\\
  \end{array}
\right\} x^{k}. $ There are many nice properties for the Stirling
number and the Bell polynomial. For example:

\begin{itemize}
\item [\rm (i)]
The following identity is well-known
\begin{eqnarray}\label{formu+stirling} \left\{
  \begin{array}{ccccc}
    n \\
   k\\
  \end{array}
\right\}k!=\sum_{j=1}^k(-1)^{k-j}\binom{k}{j}j^n
\end{eqnarray}
for $n,k\geq1$ (see \cite{Com74} for instance).
\item[\rm (ii)]
Let $G_{n,k}=k!\left\{
\begin{array}{ccccc}
    n \\
   k\\
  \end{array}
\right\}$. It counts the number of distinct ordered partitions of a
set with $n$ elements and satisfies the recurrence relation
\begin{eqnarray*}
G_{n,k}=kG_{n-1,k}+kG_{n-1,k-1}.
\end{eqnarray*} Its row-generating function $G_n(x)=\sum_{k=1}^nG_{n,k}x^k$
is called {\it the geometric polynomial} (see Tanny \cite{Tan75}).
\item [\rm (iii)]
The Stirling triangle of the second kind $\left[\left\{
  \begin{array}{ccccc}
    n \\
   k\\
  \end{array}
\right\}\right]_{n,k\geq0}$ is totally positive \cite{Bre95}.
\item [\rm (iv)]
The Bell polynomial $B_n(x)$ has only real zeros and therefore is
log-concave \cite{WYjcta05}.
\item [\rm (v)] The polynomial
$B_{n+1}(x)B_{n-1}(x)-B_n^2(x)$ has no zeros in the right half plane
\cite{Fisk08}.
\item [\rm (vi)]
The exponential generating function for $B_n(x)$ has a concise
formula $ \sum_{n\geq0}B_n(x)\frac{t^n}{n!}=e^{x(e^t-1)} $
\cite{Com74}.
\item[\rm (vii)]
A Jacobi continued fraction expansion related to $B_n(x)$ is given
as
\begin{align*}\sum_{n\geq0}B_n(x)\,t^n=\frac{1}{1-s_0t-\cfrac{r_1t^2}{1-s_1t-\cfrac{r_2t^2}{1-s_2t-\cdots}}},\end{align*}
where $s_n=n+x$ and $r_{n+1}=(n+1)x$ for $n\geq0$ \cite{Fla80}.
\item[\rm (viii)]
The sequence $(B_n(q))_{n\geq0}$ is $q$-log-convex, strongly
$q$-log-convex, $3$-$q$-log-convex and $q$-Stieltjes moment (see
\cite{CWY11,LW07,WZ16,Zhu13,Zhu182} for instance).
\end{itemize}
 We refer the reader to \cite{Com74} for more information of Stirling numbers and Bell
 polynomials.

\subsection{Whitney numbers of the second kind}

 As a generalization of the partition lattice, the Dowling lattice $Q_n(G)$ is a class of geometric lattices based on finite
groups introduced by Dowling \cite{Do73}. The Whitney number of the
second kind, denoted by $W_m(n, k)$, is the number of elements of
corank $k$ of $Q_n(G)$. It satisfies the recurrence relation
\begin{equation*}
W_m(n, k)=(mk+1)W_m(n-1, k)+W_m(n-1, k-1)
\end{equation*}
with $W_m(0,0)=1$. Its row-generating function
$D_n(m,x)=\sum_{k=0}^nW_m(n, k)x^k$ is called {\it the Dowling
polynomial} by Benoumhani \cite{Be97}.

The Whitney number $W_m(n,k)$ (resp. the Dowling polynomial) has
many properties similar to those of the Stirling number $\left\{
  \begin{array}{ccccc}
    n \\
   k\\
  \end{array}
\right\}$ (resp. the Bell polynomial). Let
$W^\diamond_{m}(n,k)=W_{m}(n, k)k!$. It satisfies the recurrence
relation
\begin{equation*}
W^\diamond_{m}(n, k)=(mk+1)W^\diamond_{m}(n-1,
k)+kW^\diamond_{m}(n-1, k-1).
\end{equation*}
Its row-generating function
$F_m(n,x)=\sum_{k=0}^nW^\diamond_{m}(n,k)x^k$ is called {\it the
Tanny-geometric polynomial} in \cite{Be97}. See
\cite{Be97,Be99,CWY11,LW07,WZ16,Zhu13,Zhu182} for some properties of
Whitney numbers and Dowling polynomials such as explicit formulae,
recurrence relations, log-concavity, real rootedness, generating
functions, $q$-log-convexity, strong $q$-log-convexity,
$3$-$q$-log-convexity and $q$-Stieltjes moment property. We also
refer the reader to \cite{CJ12} for more interesting properties
concerning $W_{m}(n, k)$.

\subsection{A triangle of Riordan}
Let $a_{n,k}$ denote the number of set partitions of $[n]$ in which
exactly $k$ of the blocks have been distinguished. It satisfies the
recurrence relation
$$a_{n,k}=a_{n-1,k-1}+(k+1)a_{n-1,k}+(k+1)a_{n-1,k+1},$$
where initial conditions $a_{0,0}=1$ and $a_{n,k}=0$ unless $0\leq
k\leq n$ (see \cite[A049020]{Slo}). Its explicit formula can be
written as
$$a_{n,k}=\sum_{i=0}^n\left\{
  \begin{array}{ccccc}
    n \\
   i\\
  \end{array}
\right\}\binom{i}{k}$$ and $a_{n,0}$ is exactly the famous Bell
number. The triangle $[a_{n,k}]_{n,k}$ first arose in Riordan's
letter \cite{R77} and was also used to characterize Bell number by
Aigner \cite{Aig99}. Its row-generating function can be written in
terms of the next Jacobi continued fraction expansion
\begin{align*}\sum_{n\geq0}\sum_{k=0}^na_{n,k}x^kt^n=\frac{1}{1-s_0t-\cfrac{r_1t^2}{1-s_1t-\cfrac{r_2t^2}{1-s_2t-\cdots}}},\end{align*}
where $s_n=n+1+x$ and $r_{n+1}=(n+1)(x+1)$ for $n\geq0$.  We refer
the reader to \cite[A049020]{Slo} for more information of $a_{n,k}$.

\subsection{Structure of this paper}
The Stirling numbers of the second kind are well-known for their
many nice properties (see \cite{EGH15,HS98,GS78,Zhu14} \cite{}for
instance). Hence the Stirling triangle formed by the Stirling
numbers of the second kind has been widely studied. Note that both
the Whitney triangle and the Riordan triangle mentioned previously
have inner link with the Stirling triangle. The properties similar
to those of the Stirling triangle have also been considered
therefore. Motivated by the attractiveness of studying their
properties, based on the fact of their inner relation, we consider a
new class of triangle generalized form those three. The purpose of
this paper is to provide a unified platform to study the generalized
properties of the new class.

Let $\mathbb{R}$~(resp. $\mathbb{R}^{\geq0}$) denote the set of all
(resp. nonnegative) real numbers. For
$\{\lambda,a_1,a_2,b_1,b_2\}\subseteq \mathbb{R}$, define an array
$T=[T_{n,k}]_{n,k}$, which satisfies the recurrence relation:
\begin{eqnarray*}
T_{n,k}=(b_1k+b_2)T_{n-1,k-1}+[(2\lambda b_1+a_1)k+a_2+\lambda(
b_1+b_2)] T_{n-1,k}+\lambda(a_1+\lambda b_1)(k+1)T_{n-1,k+1},
\end{eqnarray*}
where $T_{0,0}=1$ and $T_{n,k}=0$ unless $0\le k\le n$. Let its
row-generating function $T_n(q)=\sum_{k\geq0}T_{n,k}q^k$ for
$n\geq0$. Obviously, we have
\begin{itemize}
\item
$T_{n,k}=\left\{
  \begin{array}{ccccc}
    n \\
   k\\
  \end{array}
\right\}$ if $a_1=b_2=1$ and $a_2=b_1=\lambda=0$;
\item
   $T_{n,k}=G_{n,k}$ if $a_1=b_1=1,a_2=b_2=\lambda=0$;
   \item
   $T_{n,k}=W_m(n, k)$ if $a_1=m,a_2=b_2=1$ and $b_1=\lambda=0$;
   \item
   $T_{n,k}=W^\diamond_{m}(n, k)$ if $a_1=m,a_2=b_1=1$ and $b_2=\lambda=0$;
   \item
   $T_{n,k}=a_{n,k}$ if $a_1=b_2=\lambda=1$ and $a_2=b_1=0$;
   \item
   $T_{n,k}=\binom{n}{k}k!$ if $b_1=a_2=1$ and $a_1=b_2=\lambda=0$
   (\cite[A008279]{Slo});
   \item
   $T_{n,k}$ is A154602 in \cite{Slo} if $a_1=2$, $b_2=\lambda=1$ and $a_2=b_1=0$.
\end{itemize}
We call this array  $[T_{n,k}]_{n,k}$ {\it a
Stirling-Whitney-Riordan triangle}. The number $T_{n,k}$ can be
interpreted in terms of weighted Motzkin paths due to Flajolet
\cite{Fla80}. Let $u_k=b_1k+b_2+b_1,v_k=[(2\lambda
b_1+a_1)k+a_2+\lambda( b_1+b_2)]$ and $w_k=\lambda(a_1+\lambda
b_1)n$ for $k\geq0$. Then $T_{n,k}$ counts the number of weighted
paths starting from the origin $(0, 0)$ never falling below the
$x$-axis and ending at $(n,k)$ with up diagonal steps $(1, 1)$
weighted $u_{i-1}$, down diagonal steps $(1, -1)$ weighted $w_{i+1}$
and horizontal steps $(1, 0)$ weighted $v_i$ on the line $y=i$.

In Section $2$, we prove that the
 Stirling-Whitney-Riordan triangle $T$ is
$\textbf{x}$-totally positive with
$\textbf{x}=(a_1,a_2,b_1,b_2,\lambda)$. In Section $3$, using the
method of zeros interlacing, we show that $T_n(q)$ has only real
roots and the Tur\'{a}n-type polynomial
$T_{n+1}(q)T_{n-1}(q)-T^2_n(q)$ is stable. In Section $4$, we
present explicit formulae for $T_{n,k}$ and the exponential
generating function of $T_n(q)$. In Section $5$, using addition
formulae of the Stieltjes-Rogers type, we get a Jacobi continued
fraction expansion of the ordinary generating function of $T_n(q)$.
Furthermore, we derive the $\textbf{x}$-Stieltjes moment property
and $3$-$\textbf{x}$-log-convexity of $T_n(q)$ and hence show that
the triangular convolution $z_n=\sum_{k=0}^nT_{n,k}x_ky_{n-k}$
preserves Stieltjes moment property of sequences. Finally, in
Section $6$, for the first column $(T_{n,0})_{n\geq0}$, we derive
some properties similar to those of $(T_n(q))_{n\geq0}.$

\section{Total positivity of the
Stirling-Whitney-Riordan triangle}

Let $A=[a_{n,k}]_{n,k\ge 0}$ be a matrix of real numbers. It is
called {\it totally positive} ({\it TP} for short) if all its minors
are nonnegative. It is called {\it TP$_r$} if all minors of order
$k\le r$ are nonnegative. Let $\textbf{x}=(x_i)_{i\in{I}}$ is a set
of indeterminates. A matrix $A$ with entries being polynomials in
$\mathbb{R}[\textbf{x}]$ is {\it\textbf{x-totally positive}}
(\textbf{x}-TP for short) if all its minors are polynomials with
nonnegative coefficients in the indeterminates $\textbf{x}$ and is
{\it\textbf{x-totally positive of order $r$}} ({\it
\textbf{x}-TP$_r$} for short) if all its minors of order $k\le r$
are polynomials with nonnegative coefficients in the indeterminates
$\textbf{x}$.  Total positivity of matrices is an important and
powerful concept that arises often in various branches of
mathematics (see the monographs \cite{Kar68,Pin10} for general
details about total positivity). We also refer the reader to
\cite{Bre89,Bre95,CLW15,CLW152,GV85,Monge12,S90,Zhu14,Zhu2018,Zhu203}
for total positivity in combinatorics. The following presents the
total positivity of the Stirling-Whitney-Riordan triangle.

\begin{thm}
The Stirling-Whitney-Riordan triangle $T$ is $\textbf{x}$-TP with
$\textbf{x}=(a_1,a_2,b_1,b_2,\lambda)$.
\end{thm}

\begin{proof} Let $\overline{T}$ denote the triangle
obtained from $T$ by deleting its first row. Assume that
\begin{equation*}
J=\left[
\begin{array}{ccccc}
s_0 & r_0 &  &  &\\
t_1 & s_1 & r_1 &\\
 & t_2 & s_2 &r_2&\\
& & \ddots&\ddots & \ddots \\
\end{array}\right],
\end{equation*}
where $r_n=b_1n+b_1+b_2$, $s_n=(2\lambda b_1+a_1)n+a_2+\lambda(
b_1+b_2)$ and $t_n=\lambda(a_1+\lambda b_1)n$.
 The recurrence relation:
\begin{eqnarray*}
T_{n,k}=(b_1k+b_2)T_{n-1,k-1}+[(2\lambda b_1+a_1)k+a_2+\lambda(
b_1+b_2)] T_{n-1,k}+\lambda(a_1+\lambda b_1)(k+1)T_{n-1,k+1}
\end{eqnarray*} implies that
$$\overline{T}=TJ.$$
It follows from \cite[Theorem 2.1]{Zhu202} that $\textbf{x}$-total
positivity of $J$ implies that of $T$. In addition, $J$ is
$\textbf{x}$-TP if and only if
\begin{equation*} J^{*}=\left[
\begin{array}{ccccc}
s_0 & r^{*}_0 &  &  &\\
t^{*}_1 & s_1 & r^{*}_1 &\\
 & t^{*}_2 & s_2 &r^{*}_2&\\
& & \ddots&\ddots & \ddots \\
\end{array}\right]
\end{equation*}
is $\textbf{x}$-TP, where $r^{*}_n=\lambda(b_1n+b_1+b_2)$,
$s_n=(2\lambda b_1+a_1)n+a_2+\lambda( b_1+b_2)$ and
$t^{*}_n=(a_1+\lambda b_1)n$. By \cite[Proposition 3.3 (i)]{Zhu202},
we directly get that $J^{*}$ is $\textbf{x}$-TP with
$\textbf{x}=(a_1,a_2,b_1,b_2,\lambda)$. In consequence, we show that
the triangular matrix $T$ is $\textbf{x}$-TP with
$\textbf{x}=(a_1,a_2,b_1,b_2,\lambda)$. The proof is complete.
\end{proof}

\section{Real rootedness and log-concavity of row-generating functions}
Let $(a_k)_{k\ge 0}$ be a sequence of nonnegative numbers. The
sequence $(a_k)_{k\ge 0}$ is {\it log-concave} if $a_{i-1}a_{i+1}\le
a_i^2$ for $i\geq1$. A basic approach to prove log-concavity is to
use Newton's inequalities: Suppose that the polynomial
$\sum_{k=0}^{n}a_kx^k$ has only real zeros. Then
$$a_k^2\ge a_{k-1}a_{k+1}\left(1+\frac{1}{k}\right)\left(1+\frac{1}{n-k}\right),\quad k=1,2,\ldots,n-1,$$
and $(a_k)_{k\ge 0}$ is therefore log-concave (see Hardy, Littlewood
and P\'olya~\cite[p. 104]{HLP52}). Log-concave sequences and
real-rooted polynomials often occur in combinatorics and have been
extensively investigated. We refer the reader to Brenti
\cite{Bre94}, Stanely \cite{Sta89} and Wang and Yeh \cite{WY07} for
the log-concavity, Br\"and\'en \cite{Bra06,Bra15}, Brenti
\cite{Bre89}, Liu and Wang \cite{LW-RZP}, Wang and Yeh
\cite{WYjcta05} for the real rootedness.

Following Wagner~\cite{Wag92}, a real polynomial is said to be {\it
standard} if either it is identically zero or its leading
coefficient is positive. Assume that both polynomials $f$ and $g$
only have real zeros. Let $\{r_i\}$ and $\{s_j\}$ be all zeros of
$f$ and $g$ in nondecreasing order respectively. We say that $g$
{\it interlaces} $f$ denoted by $g\sep f$ if $\deg f=\deg g+1=n$ and
\begin{equation}\label{int-def}
r_n\le s_{n-1}\le\cdots\le s_2\le r_2\le s_1\le r_1.
\end{equation}

For two interlacing polynomials, Fisk showed the following result.
\begin{lem}\emph{\cite[Lemma 1.20]{Fisk08}}\label{lem+Fisk}
Let both $f(x)$ and $g(x)$ be standard real polynomials with only
real zeros. Assume that $\deg(f(x))=n$ and all real zeros of $f(x)$
are $s_1, \ldots, s_n$. If $\deg(g)=n-1$ and we write
$$g(x)=\sum_{i=1}^n\frac{c_if(x)}{x-s_i},$$
then $g\sep f$ if and only if all $c_i$ are positive.
\end{lem}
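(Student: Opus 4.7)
The plan is to use the partial fraction decomposition $\dfrac{g(x)}{f(x)}=\sum_{i=1}^n\dfrac{c_i}{x-s_i}$, and evaluate $g$ at the roots of $f$. For this to make sense, one should first observe that the hypothesis is implicitly asking for $f$ to have $n$ \emph{simple} real zeros: otherwise two of the polynomials $f(x)/(x-s_i)$ coincide and the coefficients $c_i$ are not even uniquely determined. Thus I would begin by assuming (without loss of generality, and consistently with $g\sep f$) that $s_1>s_2>\cdots>s_n$ are distinct, and noting that $\{f(x)/(x-s_i)\}_{i=1}^n$ is then a basis of the space of real polynomials of degree at most $n-1$, so the expansion of $g$ in the statement is unique.

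The next step is to evaluate at $x=s_j$. All terms but one vanish, giving $g(s_j)=c_j\,f'(s_j)$. Because $f$ is standard with positive leading coefficient and simple real roots ordered as above, the formula $f'(s_j)=[x^n]f\cdot\prod_{i\ne j}(s_j-s_i)$ contains exactly $j-1$ negative factors, so $\operatorname{sign}f'(s_j)=(-1)^{j-1}$. Consequently
\begin{equation*}
\operatorname{sign} g(s_j)=(-1)^{j-1}\operatorname{sign} c_j,\qquad j=1,\dots,n.
\end{equation*}

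For the sufficiency direction, assume all $c_j>0$. Then $g(s_j)$ strictly alternates in sign, so by the intermediate value theorem $g$ has at least one zero in each of the $n-1$ open intervals $(s_{j+1},s_j)$. Since $\deg g=n-1$, these account for all zeros of $g$, they are automatically real and simple, and the definition~\eqref{int-def} of $g\sep f$ is satisfied. For the necessity direction, assume $g\sep f$; then $g$ has exactly one root in each $(s_{j+1},s_j)$, so $g(s_j)g(s_{j+1})<0$ for all $j$. Combined with the sign alternation of $f'(s_j)$, this forces $c_1,c_2,\dots,c_n$ to all carry a common sign $\sigma\in\{\pm1\}$. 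Finally, comparing leading coefficients, $[x^{n-1}]g=(\sum_j c_j)\cdot[x^n]f$; since both $f$ and $g$ are standard, $\sum_j c_j>0$, ruling out $\sigma=-1$ and concluding $c_j>0$ for every $j$.

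There is no real obstacle here: the argument is a sign-tracking exercise based on partial fractions and the intermediate value theorem. The only subtlety worth flagging is the implicit simple-roots assumption and, in the necessity direction, the use of standardness of both $f$ and $g$ to upgrade ``all $c_j$ share a sign'' to ``all $c_j$ are positive''—without this the $c_j$ could all be negative (take $-g$ in place of $g$).
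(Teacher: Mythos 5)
The paper offers no proof of this lemma: it is quoted verbatim from Fisk \cite[Lemma 1.20]{Fisk08}, so there is nothing internal to compare your argument against. On its own merits, your proof is correct and is the standard one: the identity $g(s_j)=c_jf'(s_j)$, the sign pattern $\operatorname{sign}f'(s_j)=(-1)^{j-1}$ for a standard $f$ with simple zeros listed in decreasing order, the intermediate value theorem for sufficiency, and the leading-coefficient identity $[x^{n-1}]g=\bigl(\sum_j c_j\bigr)[x^n]f$ to pin down the common sign in the necessity direction. You are also right to flag that the decomposition only determines the $c_i$ when the zeros of $f$ are simple; that hypothesis is implicit in Fisk's statement and is satisfied where the paper applies the lemma (Theorem \ref{thm+RZ}, where the $T_n$ have simple, strictly interlacing zeros). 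The one caveat worth recording is that the paper's definition \eqref{int-def} of $g\sep f$ permits equalities among the zeros; in that degenerate case some $g(s_j)$ may vanish and your necessity argument only yields $c_j\ge 0$, so the ``only if'' direction as literally stated requires strict interlacing. Since the application in the paper is entirely in the strict regime, this does not affect anything downstream.
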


A real polynomial is {\it weakly (Hurwitz) stable} if all of its
zeros lie in the closed left half of the complex plane. See
\cite[Chapter 9]{M66} for deep surveys on the stability theory of
polynomials. Let $L_n(x)$ be the $n$th Legendre polynomial.
Tur\'{a}n-type inequalities \cite{T50} state that
$$L^2_n(x)-L_{n+1}(x)L_{n-1}(x)>0\,\, \text{for}\, -1<x<1.
$$
In 1948, Szeg\"{o} gave four different proofs of the famous
 Tur\'{a}n-type inequality on Legendre polynomials \cite{S48}. It has been proved that many important (orthogonal) polynomials and special functions
 satisfy some Tur\'{a}n-type inequalities (see
\cite{BK16} for instance).

The following gives the zeros properties related to $T_{n}(x)$.

\begin{thm}\label{thm+RZ}
Let $T_n(x)$ be the row-generating function of the
Stirling-Whitney-Riordan triangle $T$. If
$\{\lambda,a_1,a_2,b_1,b_2\}\subseteq \mathbb{R}^{\geq0}$ and
$a_1(b_1+b_2)> b_1a_2$\footnote{\quad If $a_1(b_1+b_2)\geq b_1a_2$,
then zeros of $T_n(q)$ may be not simple.}, then
\begin{itemize}
  \item [\rm (i)]
$T_n(x)$ has only simple real zeros  in
$(-\lambda-\frac{a_1}{b_1},-\lambda)$\quad\footnote{\quad If
$b_1=0$, then $-\lambda-\frac{a_1}{b_1}$ means $-\infty$.}
  and $T_{n-1}(x)\sep T_n(x)$ for $n\geq1$. Therefore $T_n(x)$ is log-concave for $n\geq1$.
  \item [\rm (ii)]
The Tur\'{a}n-type polynomial $T_{n+1}(x)T_{n-1}(x)-(T_n(x))^2$ is a
weakly stable polynomial for $n\geq1$.
\end{itemize}
\end{thm}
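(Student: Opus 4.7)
The plan is to first extract from the three-term recurrence a first-order differential recursion for $T_n(x)$, then prove part (i) by the standard zero-interlacing induction keyed on that recursion, and prove part (ii) by factoring the Tur\'an-type polynomial as a product of a manifestly weakly stable quadratic and a Wronskian whose stability follows from a Nevanlinna-style partial-fraction argument.

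Multiplying the defining recurrence by $x^k$ and summing over $k\geq 0$, a completed-square calculation on the coefficient of $T_{n-1}'(x)$ yields
\[
T_n(x) \;=\; L(x)\,T_{n-1}(x) \;+\; Q(x)\,T_{n-1}'(x),
\]
where $L(x)=(b_1+b_2)(x+\lambda)+a_2$ and $Q(x)=(x+\lambda)(b_1x+\lambda b_1+a_1)$. The substitution $y=x+\lambda$, $P_n(y):=T_n(y-\lambda)$ removes the shift and turns the target interval into $(-a_1/b_1,0)$, giving the recursion $P_n(y)=[(b_1+b_2)y+a_2]P_{n-1}(y)+y(b_1y+a_1)P_{n-1}'(y)$. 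For part (i), I would induct on $n$. Assuming $P_{n-1}$ has simple zeros $\xi_1>\cdots>\xi_{n-1}$ in $(-a_1/b_1,0)$, evaluating the recursion at $\xi_i$ kills the $L$-term and leaves $P_n(\xi_i)=\xi_i(b_1\xi_i+a_1)P_{n-1}'(\xi_i)$; the prefactor is strictly negative on $(-a_1/b_1,0)$, while $P_{n-1}'(\xi_i)$ alternates in sign, so by the intermediate value theorem $P_n$ acquires $n-2$ zeros strictly between consecutive $\xi_i$'s. The endpoint evaluations $P_n(0)=a_2P_{n-1}(0)$ and $P_n(-a_1/b_1)=\bigl(a_2-a_1(b_1+b_2)/b_1\bigr)P_{n-1}(-a_1/b_1)$ — whose signs are controlled by $a_2\geq 0$ and $a_1(b_1+b_2)\geq a_2b_1$ — supply the remaining two sign changes, giving $n$ simple real zeros that interlace those of $P_{n-1}$. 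This yields $T_{n-1}\sep T_n$ via Lemma~\ref{lem+Fisk}, and log-concavity of $T_n(x)$ follows from Newton's inequalities.

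For part (ii), substituting $T_{n+1}=LT_n+QT_n'$ and $T_n=LT_{n-1}+QT_{n-1}'$ into $T_{n+1}T_{n-1}-T_n^2$ causes the $L$-terms to cancel, leaving the factorization
\[
T_{n+1}(x)T_{n-1}(x)-T_n(x)^2 \;=\; Q(x)\bigl(T_{n-1}(x)T_n'(x)-T_{n-1}'(x)T_n(x)\bigr).
\]
Both zeros of $Q$ are the real non-positive numbers $-\lambda$ and $-\lambda-a_1/b_1$, so $Q$ is weakly stable; it remains to show that the Wronskian $W(x):=T_{n-1}T_n'-T_{n-1}'T_n$ is (strictly) stable. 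The interlacing from part (i) furnishes the partial-fraction expansion
\[
\frac{T_n(x)}{T_{n-1}(x)} \;=\; cx+d-\sum_i\frac{\beta_i}{x-\xi_i},\qquad c,\beta_i>0,\ \xi_i<0,
\]
so that $W/T_{n-1}^2=(T_n/T_{n-1})'=c+\sum_i\beta_i/(x-\xi_i)^2$. For any $z=x+iy$ with $x\geq 0$, the point $z-\xi_i$ lies in the open right half-plane, hence $\arg(z-\xi_i)^2\in(-\pi,\pi)$ and $\operatorname{Im}\bigl[\beta_i/(z-\xi_i)^2\bigr]$ has sign opposite to $y$ for every $i$; the imaginary part of $c+\sum_i\beta_i/(z-\xi_i)^2$ is therefore nonzero whenever $y\neq 0$, while for $y=0$ the sum is a strictly positive real. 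Thus $W$ has no zeros in the closed right half-plane, and $QW$ is weakly stable.

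The main obstacle is the Wronskian step: although the factorization of the Tur\'an-type polynomial is a one-line cancellation, the rigorous verification that $c+\sum_i\beta_i/(z-\xi_i)^2$ avoids zero throughout the closed right half-plane depends critically on the fact — inherited from part (i) — that every $\xi_i$ is \emph{strictly} negative, which is precisely what keeps $z-\xi_i$ in the open right half-plane and lets us control $\arg(z-\xi_i)^2$ uniformly there.
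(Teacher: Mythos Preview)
Your proposal is correct and follows essentially the same route as the paper: derive the differential recursion $T_n = L\,T_{n-1} + Q\,T_{n-1}'$, run the standard sign-alternation/IVT induction for interlacing in part~(i), and for part~(ii) factor the Tur\'an polynomial as $Q$ times a Wronskian and use a partial-fraction/imaginary-part argument to exclude right-half-plane zeros. The only cosmetic difference is that the paper expands $T_{n-1}/T_n$ (via Lemma~\ref{lem+Fisk}, with poles at the zeros $r_k$ of $T_n$) whereas you expand the reciprocal $T_n/T_{n-1}$; both lead to the same type of sum $\sum_k \gamma_k/(z-\rho_k)^2$ with $\gamma_k>0$ and $\rho_k<0$, and the stability conclusion is reached in the same way.
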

\begin{proof}
(i)  It follows from the recurrence relation:
\begin{eqnarray*}
T_{n,k}=(b_1k+b_2)T_{n-1,k-1}+[(2\lambda b_1+a_1)k+a_2+\lambda(
b_1+b_2)] T_{n-1,k}+\lambda(a_1+\lambda b_1)(k+1)T_{n-1,k+1}
\end{eqnarray*}
that
\begin{eqnarray}\label{rec+Tx}
T_{n}(x)=[a_2+(b_1+b_2)(x+\lambda)]T_{n-1}(x)+(x+\lambda)[a_1+b_1(x+\lambda)]T'_{n-1}(x)
\end{eqnarray}
for $n\geq1$. We will prove by induction on $n$ that $T_n(x)$ has
only simple real zeros in $(-\lambda-\frac{a_1}{b_1},-\lambda)$ and
$T_{n-1}(x)\sep T_n(x)$ for $n\geq1$. Clearly, $T_0(x)=1$.

Case 1: Assume $b_1\neq0$. For $n=1$, we have
$$T_1(x)=a_2+(b_1+b_2)(x+\lambda).$$ Obviously, it follows from $a_1(b_1+b_2)> b_1a_2$ that $T_1(x)$ has only real zero
in $(-\lambda-\frac{a_1}{b_1},-\lambda)$. Suppose for $n\geq2$ that
$T_{n-1}(x)$ has $n-1$ real zeros denoted by $$ -\lambda> s_1> s_2
>\ldots > s_{n-1}>-\lambda-\frac{a_1}{b_1}.$$ Then by the recurrence relation \eqref{rec+Tx}, we have
$$sign[ T_n(s_k)]=(-1)^k.$$ In consequence, $T_n(x)$ has $n$ simple real zeros
denoted by $r_1> r_2 >\ldots > r_n$ such that
\begin{eqnarray}\label{order+root}
r_1> s_1> r_2 > s_2>\ldots > s_{n-1}> r_n.
\end{eqnarray}

On the other hand, by the recurrence relation (\ref{rec+Tx}), we
have
$$T_n(-\lambda)=a_2T_{n-1}(-\lambda)>0,\quad T_n(-\lambda-\frac{a_1}{b_1})=\frac{a_2b_1-a_1(b_1+b_2)}{b_1}T_{n-1}(-\lambda-\frac{a_1}{b_1}),$$
which imply $r_1<-\lambda$ and $r_n>-\lambda-\frac{a_1}{b_1}$.

Case 2: Assume $b_1=0$. For $n=1$, we have
$$T_1(x)=a_2+b_2(x+\lambda).$$
Obviously, it follows from $a_1b_2> b_1a_2$ that $T_1(x)$ has only
real zero in $(-\infty,-\lambda)$. Suppose for $n\geq2$ that
$T_{n-1}(x)$ has $n-1$ real zeros denoted by $$ -\lambda> s_1> s_2
>\ldots > s_{n-1}.$$ Then by the recurrence relation \eqref{rec+Tx}, we have
$$sign[ T_n(s_k)]=(-1)^k.$$ This implies that $T_n(x)$ has $n$ simple real zeros
denoted by $r_1> r_2 >\ldots > r_n$ such that
\begin{eqnarray}
 r_1> s_1> r_2 > s_2>\ldots > s_{n-1}> r_n.
\end{eqnarray}

In addition, by the recurrence relation (\ref{rec+Tx}), we have
$$T_n(-\lambda)=a_2T_{n-1}(-\lambda)>0.$$
Thus $r_1<-\lambda$. This completes the proof of (i).

(ii) By \eqref{rec+Tx}, we deduce that
\begin{eqnarray}\label{stable}
&&T_{n+1}(x)T_{n-1}(x)-(T_n(x))^2\nonumber\\
&=&[a_2+(b_1+b_2)(x+\lambda)]T_n(x)T_{n-1}(x)+(x+\lambda)[a_1+b_1(x+\lambda)]T'_{n}(x)T_{n-1}(x)-\nonumber\\
&&[a_2+(b_1+b_2)(x+\lambda)]T_n(x)T_{n-1}(x)-(x+\lambda)[a_1+b_1(x+\lambda)]T'_{n-1}(x)T_{n}(x) \nonumber\\
&=&(x+\lambda)[a_1+b_1(x+\lambda)]\left[T'_{n}(x)T_{n-1}(x)-T'_{n-1}(x)T_{n}(x)\right]\nonumber\\
&=&-(x+\lambda)[a_1+b_1(x+\lambda)][T_n(x)]^2\left(\frac{T_{n-1}(x)}{T_{n}(x)}\right)'\label{poly}.
\end{eqnarray}

By (i), assume that $T_n(x)$ has $n$ negative zeros as $ r_1> r_2
>\ldots > r_n$. It follows from $T_{n-1}(x)\sep T_n(x)$ in (i) and
Lemma~\ref{lem+Fisk} that we get
\begin{eqnarray}\label{rat}
\frac{T_{n-1}(x)}{T_n(x)}=\sum_{k=1}^n\frac{t_k}{x-r_k},
\end{eqnarray}
where $t_k> 0$ for $1\leq k\leq n$. Combining \eqref{stable} and
(\ref{rat}) gives the iterated Tur\'{a}n-type polynomial
\begin{eqnarray*}
T_{n+1}(x)T_{n-1}(x)-(T_n(x))^2
&=&(x+\lambda)[a_1+b_1(x+\lambda)][T_n(x)]^2\sum_{k=1}^n\frac{t_k}{(x-r_k)^2}.
\end{eqnarray*}

Thus, in order get that $T_{n+1}(x)T_{n-1}(x)-(T_n(x))^2$ is sable,
it suffices to prove that
$$\sum_{k=1}^n\frac{t_k}{(x-r_k)^2}\neq0$$ for $x$ in the right half plane. Obviously,
$\sum_{k=1}^n\frac{t_k}{(x-r_k)^2}>0$ for $x\geq0$. In addition, for
$x=u+vi$ with $u>0$ and $v\neq0$,
\begin{align*}
&Im\left(\sum_{k=1}^n\frac{t_k}{(x-r_k)^2}\right)=-2v\sum_{k=1}^n\frac{t_k(u-r_k)}{(u-r_k)^2+v^2}\neq0
\end{align*}
since $\frac{t_k(u-r_k)}{(u-r_k)^2+v^2}>0$ for each $k\in[1,n]$.
 In consequence, we get
that
$$\sum_{k=1}^n\frac{t_k}{(x-r_k)^2}$$ has no zeros in the right half plane. So does the
polynomial $T_{n+1}(x)T_{n-1}(x)-(T_n(x))^2$. We complete the proof
of (ii).
\end{proof}

\section{Exponential generating function and explicit formula }
In this section, we will present the exponential generating function
of $T_n(q)$ and use it to derive an explicit formula for the
Stirling-Whitney-Riordan triangle $T$ as follows.
\begin{thm}\label{thm+EXP}
Let $T_n(q)$ be the row-generating function of $T$ with
$a_1^2+b_1^2\neq0$.
\begin{itemize}
  \item [\rm (i)]
The exponential generating function of $T_n(q)$ is given as
\footnote{\quad For $a_1=0$ or $b_1=0$, using continuity of
functions, the corresponding formula for exponential generating
function means its limits as follows:
\begin{eqnarray*}
\sum_{n\geq0}T_n(q)\frac{t^n}{n!}&=&\begin{cases}
e^{a_2t}\left[1-b_1(q+\lambda)t\right]^{-(1+\frac{b_2}{b_1})},&\text{for}\quad
a_1=0,b_1\neq0\\
&\\
e^{a_2t+\left[\frac{b_2(q+\lambda)(e^{a_1t}-1)}{a_1}\right]},&\text{for}\quad
a_1\neq0,b_1=0.
\end{cases}
\end{eqnarray*}}
 \begin{eqnarray*}\label{EGF+F}
\sum_{n\geq0}T_n(q)\frac{t^n}{n!}&=&
e^{a_2t}\left[1+\frac{b_1(q+\lambda)(1-e^{a_1t})}{a_1}\right]^{-(1+\frac{b_2}{b_1})}.
\end{eqnarray*}
  \item [\rm (ii)]
An explicit formula for $T_{n,k}$ can be written as
  \begin{eqnarray*}
T_{n,k}&=&\begin{cases} \sum_{i\geq
k}\frac{\prod_{j=1}^i(b_2+b_1j)}{a^i_1}\times\binom{i}{k}{\lambda}^{i-k}\times\frac{1}{i!}\sum_{j=0}^i\binom{i}{j}(-1)^{i-j}(a_2+a_1j)^n,
&\text{for}\quad a_1\neq0\\
\sum_{i\geq
k}\prod_{j=1}^i(b_2+b_1j)\times\binom{n}{i}\binom{i}{k}{\lambda}^{i-k}a^{n-i}_2,&\text{for}\quad
a_1=0.
\end{cases}
\end{eqnarray*}
\end{itemize}
\end{thm}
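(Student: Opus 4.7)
The plan is to turn the coefficient recurrence into a first-order linear PDE for the bivariate exponential generating function $F(t,q)=\sum_{n\ge 0} T_n(q)\,t^n/n!$, solve it by the method of characteristics, and then extract coefficients to read off the explicit formula. Multiplying the defining recurrence by $q^k$ and summing over $k$ already yields the identity (\ref{rec+Tx}) recorded in the proof of Theorem~\ref{thm+RZ}, namely
\begin{equation*}
T_n(q)=[a_2+(b_1+b_2)(q+\lambda)]T_{n-1}(q)+(q+\lambda)[a_1+b_1(q+\lambda)]T'_{n-1}(q).
\end{equation*}
Dividing by $(n-1)!$ and summing $n\ge 1$ translates this into
\begin{equation*}
\partial_t F=[a_2+(b_1+b_2)(q+\lambda)]F+(q+\lambda)[a_1+b_1(q+\lambda)]\partial_q F,\qquad F(0,q)=1.
\end{equation*}

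Next I would apply the method of characteristics. Setting $u=q+\lambda$, the characteristic system reduces to $du/dt=-u(a_1+b_1 u)$, whose partial-fraction integration yields the first integral $\eta=e^{a_1 t}u/(a_1+b_1 u)$, constant along characteristics. On a characteristic, $F$ evolves by $dF/dt=[a_2+(b_1+b_2)u(t)]F$, so
\begin{equation*}
F(t,q)=\exp\Bigl(a_2 t+(b_1+b_2)\int_0^t u(s)\,ds\Bigr).
\end{equation*}
Rather than solving for $u(t)$ explicitly, I would use $ds=-du/[u(a_1+b_1 u)]$ to evaluate $\int_0^t u(s)\,ds=b_1^{-1}\ln[(a_1+b_1 u_0)/(a_1+b_1 u)]$. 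Solving the invariant $\eta$ for $u_0=q_0+\lambda$ in terms of the current point $(t,q)$ gives $(a_1+b_1 u_0)/(a_1+b_1 u)=a_1/[a_1+b_1(q+\lambda)(1-e^{a_1 t})]$, so
\begin{equation*}
F(t,q)=e^{a_2 t}\Bigl[1+\frac{b_1(q+\lambda)(1-e^{a_1 t})}{a_1}\Bigr]^{-(1+b_2/b_1)},
\end{equation*}
proving (i). The degenerate cases $a_1=0$ or $b_1=0$ drop out either by inspecting the limits $(e^{a_1 t}-1)/a_1\to t$ and $(1-y)^{-(1+b_2/b_1)}\to e^{(b_2/b_1)\cdot \text{(limit)}}$ (after regrouping), or by rerunning the characteristic calculation with the simpler ODE.

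For (ii) I would expand the EGF in the case $a_1\ne 0$ by the generalized binomial series
\begin{equation*}
\Bigl[1-\tfrac{b_1(q+\lambda)(e^{a_1 t}-1)}{a_1}\Bigr]^{-(1+b_2/b_1)}=\sum_{i\ge 0}\frac{\prod_{j=1}^{i}(b_2+b_1 j)}{a_1^i\,i!}(q+\lambda)^i(e^{a_1 t}-1)^i,
\end{equation*}
then use $(e^{a_1 t}-1)^i=\sum_{j=0}^{i}\binom{i}{j}(-1)^{i-j}e^{j a_1 t}$ and $(q+\lambda)^i=\sum_{k=0}^{i}\binom{i}{k}\lambda^{i-k}q^k$. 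Multiplying by $e^{a_2 t}=\sum_n (a_2 t)^n/n!$, the coefficient of $t^n/n!$ combines the exponentials into $(a_2+a_1 j)^n$, and the coefficient of $q^k$ yields the stated formula. When $a_1=0$, the Cauchy product of $e^{a_2 t}=\sum_n a_2^n t^n/n!$ with $\sum_i\prod_{j=1}^i(b_2+b_1 j)(q+\lambda)^i t^i/i!$ gives the second case directly after expanding $(q+\lambda)^i$.

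The main obstacle is the algebra in passing from the characteristic invariant back to a closed-form expression for $(a_1+b_1 u_0)/(a_1+b_1 u)$ as a function of $(t,q)$; keeping the sign of the exponent $-(1+b_2/b_1)$ straight, and handling the degenerate limits $a_1\to 0$ and $b_1\to 0$ without losing terms, are the steps most likely to require care. Once the EGF is in hand, the extraction of $T_{n,k}$ reduces to routine manipulation of exponential and binomial series.
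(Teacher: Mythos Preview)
Your proposal is correct and follows essentially the same route as the paper. Both set up the same first-order linear PDE for $F(t,q)$ from the recurrence (\ref{rec+Tx}), and both extract $T_{n,k}$ from the EGF via the generalized binomial expansion of the bracket, the binomial expansion of $(e^{a_1 t}-1)^i$, and the expansion of $(q+\lambda)^i$, with the degenerate cases handled by limits.

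The only notable difference is in part (i): the paper does not solve the PDE but merely states the closed form and remarks that it is routine to check it satisfies the equation with the initial condition, whereas you actually derive the solution via the method of characteristics. Your derivation is sound (the invariant $\eta=e^{a_1 t}u/(a_1+b_1u)$ and the identity $(a_1+b_1 u_0)/(a_1+b_1 u)=a_1/[a_1+b_1(q+\lambda)(1-e^{a_1 t})]$ both check out), so your argument is more self-contained than the paper's, at the cost of the extra algebra you flag as the main obstacle.
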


\begin{proof}
Let the exponential generating function
$$\mathcal {T}(q,t)=\sum_{n,k\geq0}T_{n,k}q^k\frac{t^n}{n!}=\sum_{n\geq0}T_n(q)\frac{t^n}{n!}.$$
Then by the recurrence relation:
\begin{eqnarray*}
T_{n,k}=(b_1k+b_2)T_{n-1,k-1}+[(2\lambda b_1+a_1)k+a_2+\lambda(
b_1+b_2)] T_{n-1,k}+\lambda(a_1+\lambda b_1)(k+1)T_{n-1,k+1},
\end{eqnarray*} we have the next
partial differential equation
$$\mathcal {T}_t(q,t)-[b_1(q+\lambda)+a_1](q+\lambda)\mathcal {T}_q(q,t)=[a_2+(b_1+b_2)(q+\lambda)]\mathcal {T}(q,t)$$
with the initial condition $\mathcal {T}(q,0)=1$. It is routine to
check that
\begin{eqnarray}\label{EGF+F}
\mathcal
{T}(q,t)=e^{a_2t}\left[1+\frac{b_1(q+\lambda)(1-e^{a_1t})}{a_1}\right]^{-(1+\frac{b_2}{b_1})}
\end{eqnarray} is a solution of the above
partial differential with the initial condition.

(ii) Its proof will be divided into the following three cases.

Case $1$: $a_1b_1\neq0$. We have
\begin{eqnarray*}
 \sum_{n,k\geq0}
 T_{n,k}q^k\frac{t^n}{n!}&=&e^{a_2t}\left[1+\frac{b_1(q+\lambda)(1-e^{a_1t})}{a_1}\right]^{-(1+\frac{b_2}{b_1})}\\
 &=&e^{a_2t}\left[1-\frac{b_1(q+\lambda)(e^{a_1t}-1)}{a_1}\right]^{-(1+\frac{b_2}{b_1})}\\
 &=&e^{a_2t}\sum_{i\geq0}\binom{\frac{b_2}{b_1}+i}{i}\left[\frac{b_1(q+\lambda)(e^{a_1t}-1)}{a_1}\right]^i\\
 &=&e^{a_2t}\sum_{i\geq0}\binom{\frac{b_2}{b_1}+i}{i}\left(\frac{b_1(q+\lambda)}{a_1}\right)^i\sum_{j=0}^i\binom{i}{j}(-1)^{i-j}e^{a_1jt}\\
 &=&\sum_{i\geq0}\binom{\frac{b_2}{b_1}+i}{i}\left(\frac{b_1(q+\lambda)}{a_1}\right)^i\sum_{j=0}^i\binom{i}{j}(-1)^{i-j}e^{(a_2+a_1j)t}\\
 &=&\sum_{i\geq0}\frac{\prod_{j=1}^i(b_2+b_1j)}{i!}\times\frac{(q+\lambda)^i}{a^i_1}\sum_{j=0}^i\binom{i}{j}(-1)^{i-j}\sum_{n\geq0}\frac{(a_2+a_1j)^nt^n}{n!},
\end{eqnarray*}
which implies
\begin{eqnarray*}
 T_{n,k}=\sum_{i\geq k}\frac{\prod_{j=1}^i(b_2+b_1j)}{a^i_1}\times\binom{i}{k}{\lambda}^{i-k}\times\frac{1}{i!}\sum_{j=0}^i\binom{i}{j}(-1)^{i-j}(a_2+a_1j)^n.
\end{eqnarray*}
Case $2$: $b_1=0$ and $a_1\neq0$. We have
\begin{eqnarray*}
 \sum_{n,k\geq0}
 T_{n,k}q^k\frac{t^n}{n!}&=&e^{a_2t+\left[\frac{b_2(q+\lambda)(e^{a_1t}-1)}{a_1}\right]}\\
 &=&e^{a_2t}\sum_{i\geq0}\left[\frac{b_2(q+\lambda)(e^{a_1t}-1)}{a_1}\right]^i\times\frac{1}{i!}\\
 &=&e^{a_2t}\sum_{i\geq0}\left(\frac{b_2(q+\lambda)}{a_1}\right)^i\times\frac{1}{i!}\sum_{j=0}^i\binom{i}{j}(-1)^{i-j}e^{a_1jt}\\
 &=&\sum_{i\geq0}\left(\frac{b_2}{a_1}\right)^i(q+\lambda)^i\times\frac{1}{i!}\sum_{j=0}^i\binom{i}{j}(-1)^{i-j}e^{(a_2+a_1j)t}\\
 &=&\sum_{i\geq0}\left(\frac{b_2}{a_1}\right)^i(q+\lambda)^i\times\frac{1}{i!}\sum_{j=0}^i\binom{i}{j}(-1)^{i-j}\sum_{n\geq0}\frac{(a_2+a_1j)^nt^n}{n!}.
\end{eqnarray*}
Obviously,
\begin{eqnarray*}
 T_{n,k}=\sum_{i\geq k}\frac{b^i_2}{a^i_1}\binom{i}{k}{\lambda}^{i-k}\times\frac{1}{i!}\sum_{j=0}^i\binom{i}{j}(-1)^{i-j}(a_2+a_1j)^n.
\end{eqnarray*}
Case $3$: $a_1=0$ and $b_1\neq0$. We have
\begin{eqnarray*}
 \sum_{n,k\geq0}
 T_{n,k}q^k\frac{t^n}{n!}&=&e^{a_2t}\left[1-b_1(q+\lambda)t\right]^{-(1+\frac{b_2}{b_1})}\\
 &=&e^{a_2t}\sum_{i\geq0}\binom{\frac{b_2}{b_1}+i}{i}\left[b_1(q+\lambda)t\right]^i\\
 &=&e^{a_2t}\sum_{i\geq0}\prod_{j=1}^i(b_2+b_1j)\times(q+\lambda)^i\frac{t^i}{i!}.
\end{eqnarray*}
Thus, we get
\begin{eqnarray*}
 T_{n,k}=\sum_{i\geq k}\prod_{j=1}^i(b_2+b_1j)\times\binom{n}{i}\binom{i}{k}{\lambda}^{i-k}a^{n-i}_2.
\end{eqnarray*}

This completes the proof.
\end{proof}

\section{Stieltjes moment property and continued fractions}

In this section, we will present a continued fraction expansion of
$\sum_{n\geq0}T_n(q)t^n$ and demonstrate a Stieltjes moment property
for $T_n(q)$. Continued fraction is an important tool in
combinatorics, which is closely related to many aspects, {\it e.g.,}
combinatorial lattice paths, combinatorial interpretations,
combinatorial identities, combinatorial positivity, determinants of
sequences, and so on. We refer the reader to Flajolet \cite{Fla80}
for more information concerning continued fraction expansions
related to many important combinatorial objects.

Continued fraction plays an important role in studying Hankel-total
positivity and Stieltjes moment sequences. Given a sequence
$\alpha=(a_k)_{k\ge 0}$, define its {\it Hankel matrix} $H(\alpha)$
by
$$H(\alpha)=[a_{i+j}]_{i,j\ge 0}=
\left[
  \begin{array}{ccccc}
    a_0 & a_1 & a_2 & a_3 & \cdots \\
    a_1 & a_2 & a_3 & a_4 & \cdots \\
    a_2 & a_3 & a_4 & a_5 &\cdots  \\
    a_3 & a_4 & a_5 & a_6 & \cdots\\
    \vdots &\vdots  & \vdots & \vdots & \ddots \\
  \end{array}
\right].$$ We say that $\alpha$ is a {\it Stieltjes moment} ({\it
SM} for short) sequence if it has the form
\begin{equation}\label{i-e}
a_k=\int_0^{+\infty}x^kd\mu(x),
\end{equation}
where $\mu$ is a non-negative measure on $[0,+\infty)$ (see
\cite[Theorem 4.4]{Pin10} for instance). The Stieltjes moment
problem is one of classical moment problems and arises naturally in
many branches of mathematics \cite{ST43,Wid41}. It is well-known
that the following are equivalent:
\begin{itemize}
  \item [\rm (i)]
$\alpha$ is a Stieltjes moment sequence.
  \item [\rm (ii)]
Its Hankel matrix $H(\alpha)$ is TP.
 \item [\rm (iii)]
Its generating function has the Stieltjes continued fraction
expansion
$$\sum_{n\geq0}a_nz^n=\frac{1}{1-\cfrac{\beta_0z}{1-\cfrac{\beta_1z}{1-\cfrac{\beta_2z}{1-\cdots}}}}$$
with $\beta_i\geq0$ for $i\geq0$.
 \item [\rm (iv)]
 Positivity characterization: $\sum_{n=0}^Nc_na_n\ge 0$ for every polynomial
$ \sum_{n=0}^Nc_nq^n\ge 0 $ on $[0,\infty)$.
\end{itemize}
Let $\textbf{x}=(x_i)_{i\in{I}}$ is a set of indeterminates. A
polynomial sequence $(\alpha_n(\textbf{x}))_{n\geq0}$ in
$\mathbb{R}[\textbf{x}]$ is called a {\it $\textbf{x}$-Stieltjes
moment} ($\textbf{x}$-SM for short) sequence if its associated
infinite Hankel matrix is $\textbf{x}$-TP, see Zhu
\cite{Zhu2018,Zhu202} for instance. When
$(\alpha_n(\textbf{x}))_{n\geq0}$ is a sequence of real numbers,
$\textbf{x}$-SM sequence reduces to the classical Stieltjes moment
sequence. For $\textbf{x}$-SM sequences, the following criterion was
proved in \cite{Zhu2018,Zhu202}.

\begin{lem}\emph{\cite{Zhu202}}\label{lem+q-SM}
Let $\{s_n(\textbf{x}),r_{n}(\textbf{x}),H_n(\textbf{x})\}\subseteq
\mathbb{R}^{\geq0}[\textbf{x}]$ for $n\in \mathbb{N}$ and
\begin{align*}\sum_{n\geq0}
H_n(\textbf{x})t^n=\frac{1}{1-s_0(\textbf{x})t-\cfrac{r_1(\textbf{x})t^2}{1-s_1(\textbf{x})t-\cfrac{r_2(\textbf{x})t^2}{1-\cdots}}}.
\end{align*}
If there exists
$\{\lambda_n(\textbf{x}),u_n(\textbf{x}),v_n(\textbf{x})\}\subseteq\mathbb{R}^{\geq0}[\textbf{x}]$
such that $s_n=\lambda_n+u_n+v_n$ and $r_{n+1}=u_{n+1}v_{n}$ for
$n\geq0$, then polynomials $H_n(\textbf{x})$ form a $\textbf{x}$-SM
sequence for $n\geq0$.
\end{lem}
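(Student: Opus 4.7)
The plan is to prove that the infinite Hankel matrix $\mathcal{H} = [H_{i+j}(\textbf{x})]_{i,j \geq 0}$ is $\textbf{x}$-totally positive, since by definition this is exactly the $\textbf{x}$-Stieltjes moment property of $(H_n(\textbf{x}))_{n \geq 0}$.

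First I would translate the Jacobi continued fraction hypothesis into the spectral identity $H_n = [J^n]_{0,0}$, where $J$ is the infinite tridiagonal production matrix with diagonal $(s_n)_{n \geq 0}$, subdiagonal $(r_{n+1})_{n \geq 0}$, and superdiagonal all ones. The hypotheses $s_n = \lambda_n + u_n + v_n$ and $r_{n+1} = u_{n+1} v_n$ are exactly what is needed to exhibit the factorization
\[
J \;=\; D + UL,
\]
where $D = \mathrm{diag}(\lambda_0, \lambda_1, \ldots)$, $U$ is the upper bidiagonal matrix with diagonal $(u_n)$ and superdiagonal all $1$'s, and $L$ is the lower bidiagonal matrix with diagonal all $1$'s and subdiagonal $(v_n)$. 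A direct entry-by-entry check gives $(UL)_{n,n} = u_n + v_n$, $(UL)_{n+1,n} = u_{n+1} v_n$, and $(UL)_{n,n+1} = 1$, which combined with $D$ yields $J$ exactly.

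Next I would invoke Flajolet's combinatorial interpretation of $B_{n,k} := [J^n]_{0,k}$ as the total weight of Motzkin-type walks on $\mathbb{N}$ of length $n$ from height $0$ to height $k$, and then exploit the factorization $J = D + UL$ to refine each step of such a walk into at most two micro-steps carrying individual weights drawn from $\{1, \lambda_n, u_n, v_n\} \subseteq \mathbb{R}^{\geq 0}[\textbf{x}]$. This realizes $B$ as the path-counting matrix of a planar acyclic weighted digraph (a ``brick-wall''-type network) whose edges all bear nonnegative polynomial weights. By the Lindstr\"om-Gessel-Viennot lemma, every minor of $B$ equals a sum over families of non-intersecting paths, each contributing a monomial in $\lambda_n, u_n, v_n$ with a nonnegative integer coefficient, so $B$ is $\textbf{x}$-TP.

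Finally, the classical orthogonality identity associated with the tridiagonal $J$,
\[
\mathcal{H} \;=\; B \,\mathrm{diag}(d_k)_{k \geq 0}\, B^T, \qquad d_k \;=\; \prod_{i=1}^{k} r_i \;=\; \prod_{i=1}^{k} u_i v_{i-1},
\]
yields a diagonal middle factor that is manifestly $\textbf{x}$-TP, so the Cauchy-Binet formula applied to the product gives $\textbf{x}$-TP of $\mathcal{H}$. The main technical obstacle is the second step: the construction of the planar network realizing $J = D + UL$ and the verification that LGV applies cleanly without any sign cancellation. The factorization $J = D + UL$ with $\textbf{x}$-nonnegative ingredients is precisely what makes such a planar realization possible; once the network is set up, the remaining reductions are routine linear algebra.
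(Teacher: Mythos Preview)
The paper does not prove this lemma; it is quoted from the reference \cite{Zhu202} (see also \cite{Zhu2018}) and used as a black box, so there is no in-paper argument to compare against.  Your outline is the standard production-matrix route: the Hankel factorization $\mathcal H=B\,\mathrm{diag}(d_k)\,B^{T}$ and the splitting $J=D+UL$ are both correct and are exactly the reason the hypothesis $s_n=\lambda_n+u_n+v_n$, $r_{n+1}=u_{n+1}v_n$ is the relevant one.

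Two points in your sketch are genuinely incomplete, however.  First, to apply LGV to $B=[\,[J^n]_{0,k}\,]_{n,k}$ you must name \emph{distinct} sources $\sigma_{n_1},\ldots,\sigma_{n_r}$ for a minor on rows $n_1<\cdots<n_r$; as written, every walk starts at the single vertex $(0,0)$, and LGV says nothing about that situation.  Second, $J=D+UL$ is a \emph{sum}, so only the $UL$ part decomposes into two micro-steps; the $\lambda_h$-weighted ``stay'' edges from $D$ bypass the intermediate layer entirely, and you must actually draw them without introducing edge crossings.  This is where the hypothesis does real work: for, say, $s_n\equiv1$ and $r_n\equiv10$ (which does \emph{not} admit such a decomposition) the matrix $B$ fails to be TP already at the $2\times2$ level, even though the unrefined Motzkin network still computes its entries.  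Both issues are fixable---the cleanest repair is to use the gadget to show that the single-step matrix $J$ is itself $\textbf{x}$-TP and then invoke the production-matrix lemma (the same \cite[Theorem 2.1]{Zhu202} used in the first proof of Theorem~2.1 of the present paper) to pass from $J$ to $B$---but your write-up does not yet contain the verification.
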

 In order to compute continued
fraction, we need the following addition formulae of the
Stieltjes-Rogers type.
\begin{lem}\cite{R07,S89}\label{lem+S+R}
For a sequence $(\alpha_n)_{n\geq0}$, define the function
  \begin{eqnarray*}
  h(x)&=&\sum_{n\geq0}\alpha_n\frac{x^n}{n!}.
  \end{eqnarray*}
 If there exists two sequences $(e_{n})_{n\geq0}$ and $(w_n)_{n\geq0}$ such that the generating
function
$$h(x+y)=\sum_{n\geq0}w_kf_k(x)f_k(y),$$
where
$$f_k(x)=\frac{x^k}{k!}+e_{k+1}\frac{x^{k+1}}{(k+1)!}+O(x^{k+2}),$$
then we have
\begin{align*}
\sum_{n\geq0}\alpha_n
t^n=\frac{1}{1-s_0t-\cfrac{r_1t^2}{1-s_1t-\cfrac{r_2t^2}{1-s_2t-\cfrac{r_3t^2}{1-s_3t-\cdots}}}},
\end{align*}
where $s_n=e_{n+1}-e_n$ and $r_{n+1}=w_{n+1}/w_n$ for $n\geq0$.
\end{lem}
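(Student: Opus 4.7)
The plan is to reduce the addition formula to a Cholesky-like factorization of the Hankel matrix of moments, then invoke the classical correspondence between such factorizations and Jacobi continued fractions via formal orthogonal polynomials.

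First, I would expand both sides of the addition formula as double formal power series in $x$ and $y$. Writing $f_k(x)=\sum_{i\geq k}c_{k,i}x^i/i!$ with $c_{k,k}=1$ and $c_{k,k+1}=e_{k+1}$, and using $h(x+y)=\sum_{i,j\geq 0}\alpha_{i+j}x^iy^j/(i!\,j!)$, matching coefficients yields the bilinear identity $\alpha_{i+j}=\sum_k w_k c_{k,i}c_{k,j}$. In matrix form, this says that the Hankel matrix $H=[\alpha_{i+j}]_{i,j\geq 0}$ factors as $H=UDU^T$, where $U$ is the unit lower triangular matrix with entries $U_{i,k}=c_{k,i}$ (in particular $U_{k,k}=1$ and $U_{k+1,k}=e_{k+1}$) and $D=\mathrm{diag}(w_0,w_1,\ldots)$.

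Second, I would appeal to the formal theory of orthogonal polynomials. Define the linear functional $\Lambda$ on polynomials by $\Lambda(x^n)=\alpha_n$. By uniqueness of the unit-Cholesky factorization, the monic polynomials $\{P_k\}_{k\geq 0}$ whose coefficient matrix equals $U^{-1}$ are orthogonal with respect to $\Lambda$, with $\Lambda(P_k^2)=w_k$. Favard's theorem in its formal (algebraic) version then produces a three-term recurrence $P_{k+1}(x)=(x-s_k)P_k(x)-r_k P_{k-1}(x)$, and by the classical Stieltjes--Rogers correspondence this three-term recurrence encodes precisely the Jacobi continued fraction expansion of $\sum_n \alpha_n t^n$ with parameters $s_k$ and $r_{k+1}$.

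Finally, to extract the explicit formulas one matches matrix entries: the standard computation $r_k=\Lambda(P_k^2)/\Lambda(P_{k-1}^2)$ yields $r_{n+1}=w_{n+1}/w_n$, while comparing the coefficient of $x^k$ on both sides of the three-term recurrence---using that the coefficient of $x^{k-1}$ in $P_k$ equals $(U^{-1})_{k,k-1}=-e_k$ and the coefficient of $x^k$ in $P_{k+1}$ equals $(U^{-1})_{k+1,k}=-e_{k+1}$---yields $s_k=e_{k+1}-e_k$ (with the convention $e_0:=0$). The main obstacle is the second step: rigorously transferring Favard's theorem and the Stieltjes--Rogers correspondence to the purely formal setting (where no underlying positive measure need exist), which requires a nondegeneracy hypothesis $w_k\neq 0$ so that the Cholesky-like factorization and the identification of continued fraction parameters are well-posed.
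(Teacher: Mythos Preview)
Your argument is correct and is essentially the modern reformulation of the classical Stieltjes--Rogers addition-formula method. The coefficient comparison yielding $H=UDU^{T}$ is right, the identification of the monic orthogonal polynomials as the rows of $U^{-1}$ is right, and your extraction of $s_k$ and $r_{k+1}$ from the three-term recurrence is computed correctly (including the observation $(U^{-1})_{k,k-1}=-U_{k,k-1}=-e_k$ for unit lower triangular $U$, which is the key subdiagonal identity). The nondegeneracy caveat $w_k\neq 0$ you flag is exactly the right hypothesis to make the formal Favard/Stieltjes--Rogers machinery go through.

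One remark on the comparison you were asked about: the paper does \emph{not} supply its own proof of this lemma. It is quoted as a classical tool with citations to Rogers~(1907) and Stieltjes~(1889), and then applied to the exponential generating function of $T_n(q)$. So there is nothing in the paper to compare your proof against; your write-up stands on its own as a clean proof of the cited lemma. If anything, the original sources argue more directly with the continued-fraction recursion and power-series manipulations, whereas you route everything through the Hankel factorization and orthogonal-polynomial language --- the two are equivalent, and yours is the version most readers today would recognize.
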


If a polynomial sequence $(A_n(q))_{n\geq0}$ in an indeterminate $q$
is $q$-SM, then its triangular convolution preserves the SM property
in terms of the next result.

\begin{lem}\label{lem+conv}\emph{\cite{WZ16}}
For $n\in \mathbb{N},$ let $A_n(q)=\sum_{k=0}^{n}A_{n,k}q^k$ be the
$n$th row-generating function of a matrix $[A_{n,k}]_{n,k\geq0}$.
Assume that $(A_n(q))_{n\ge 0}$ is a SM sequence for any fixed $q\ge
0$. If both $(x_n)_{n\ge 0}$ and $(y_n)_{n\ge 0}$ are SM sequences,
then so is $(z_n)_{n\ge 0}$ defined by
\begin{eqnarray}\label{a-c}
z_n=\sum_{k=0}^{n}A_{n,k}x_ky_{n-k}.
\end{eqnarray}
\end{lem}

For the $\textbf{x}$-SM property, one necessary condition is
$\textbf{x}$-log-convexity. For a polynomial sequence
$(f_n(\textbf{x}))_{n\geq 0}$, it is {\it $\textbf{x}$-log-convex}
if
$$ f_{n+1}(\textbf{x})f_{n-1}(\textbf{x})- f_n(\textbf{x})^2$$ is a polynomial with nonnegative coefficients for $n\geq 1$. Define the operator
$\mathcal {L}$ which maps a polynomial sequence
$(f_n(\textbf{x}))_{n\geq 0}$ to another polynomial sequence
$(g_i(\textbf{x}))_{i\geq 1}$ given by
$$g_i(\textbf{x}):=f_{i-1}(\textbf{x})f_{i+1}(\textbf{x})-f_i(\textbf{x})^2.$$
Then the $\textbf{x}$-log-convexity of $(f_n(\textbf{x}))_{n\geq 0}$
is equivalent to the $\textbf{x}$-positivity of $\mathcal
{L}\{f_i(\textbf{x})\}$, i.e., the coefficients of $g_i(\textbf{x})$
are nonnegative for all $i\geq1$. Generally, we say that
$(f_i(\textbf{x}))_{i\geq 0}$ is {\it $k$-$\textbf{x}$-log-convex}
if the coefficients of $\mathcal {L}^m\{f_i(\textbf{x})\}$ are
nonnegative for all $m\leq k$, where $\mathcal {L}^m=\mathcal
{L}(\mathcal {L}^{m-1})$.

\begin{lem}\emph{\cite{Zhu202}}\label{lem+3-q-log-convex}
If the Hankel matrix $[A_{i+j}(\textbf{x})]_{i,j\geq0}$ is
$\textbf{x}$-TP$_4$, then the sequence $(A_n(\textbf{x}))_{n\geq0}$
is $3$-$\textbf{x}$-log-convex.
\end{lem}
Obviously, if $(A_n(\textbf{x}))_{n\geq0}$ is a $\textbf{x}$-SM
sequence, then $[A_{i+j}(\textbf{x})]_{i,j}$ is
$\textbf{x}$-Hankel-TP$_4$. Thus it is $3$-$\textbf{x}$-log-convex
by Lemma \ref{lem+3-q-log-convex}.

If $\textbf{x}$ is an indeterminate $q$, then it has been proved
that many famous polynomials have the $q$-log-convexity, e.g., the
Bell polynomials, the classical Eulerian polynomials, the Narayana
polynomials of type $A$ and $B$, Jacobi-Stirling polynomials, and so
on (see Liu and Wang \cite{LW07}, Chen {\it et al.} \cite{CTWY10},
Zhu \cite{Zhu13,Zhu14,Zhu17,Zhu182,Zhu20} for instance). These
polynomials also have $3$-$q$-log-convexity (see Zhu
\cite{Zhu18,Zhu19}).

The main result of this section is the following.

\begin{thm}\label{thm+SM+CT}
Let $T_n(q)$ be the row-generating function of $T$. Then we have the
next results.
\begin{itemize}
  \item [\rm (i)]
The ordinary generating function of $T_n(q)$ has a Jacobi continued
  fraction expression

  $$\sum_{n\geq0}T_n(q)t^n=\frac{1}{1-s_0t-\cfrac{r_1t^2}{1-s_1t-\cfrac{r_2t^2}{1-s_2t-\cdots}}},$$
where $s_n=a_2+a_1n+[b_1(2n+1)+b_2](q+\lambda)$ and
$r_{n+1}=[b_1(n+1)+b_2](q+\lambda)[b_1(q+\lambda)+a_1](n+1)$ for
$n\geq0$.
   \item [\rm (ii)]
 The sequence $(T_{n}(q))_{n\geq0}$ is $\textbf{x}$-SM
and $3$-$\textbf{x}$-log-convex with
$\textbf{x}=(a_1,a_2,b_1,b_2,\lambda,q)$.
  \item [\rm (iii)]
The convolution $z_n=\sum_{k\geq0}T_{n,k}x_ky_{n-k}$ preserves SM
property if $\{\lambda,a_1,a_2,b_1,b_2\}\subseteq
\mathbb{R}^{\geq0}$.
\item[\rm(iv)] We have Hankel-determinants
\begin{eqnarray*} \det_{0\leq i,j\leq
n-1}(T_{i+j}(q))&=&r_1^{n-1}r_2^{n-2}\ldots r_{n-2}^2r_{n-1}
\end{eqnarray*}  and
\begin{eqnarray*}
\det_{0\leq i,j\leq n-1}(T_{i+j+1}(q))&=&r_1^{n-1}r_2^{n-2}\cdots
r_{n-2}^2r_{n-1}Q_n,
\end{eqnarray*}
where $(Q_n)_{n\geq0}$ is defined by $Q_{n+1}=s_nQ_n-r_nQ_{n-1}$
with $Q_0=1$ and $Q_1=s_0$.
\end{itemize}
\end{thm}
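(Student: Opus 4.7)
The plan is to deduce part (i) from the Stieltjes--Rogers addition formula (Lemma \ref{lem+S+R}) applied to the exponential generating function $h(t)=\sum_{n\geq0}T_n(q)\,t^n/n!$ supplied by Theorem \ref{thm+EXP}. First I would introduce the auxiliary functions
\[
M(t)=\frac{e^{a_1t}-1}{a_1},\qquad g(t)=\frac{M(t)}{1-b_1(q+\lambda)M(t)},
\]
and rewrite $h(t)=e^{a_2t}\bigl(1+b_1(q+\lambda)g(t)\bigr)^{1+b_2/b_1}$. The identity $e^{a_1(x+y)}=e^{a_1x}e^{a_1y}$ gives the additive rule $M(x+y)=M(x)+M(y)+a_1M(x)M(y)$, and a short algebraic manipulation converts it into
\[
1+b_1(q+\lambda)g(x+y)=\frac{\bigl(1+b_1(q+\lambda)g(x)\bigr)\bigl(1+b_1(q+\lambda)g(y)\bigr)}{1-b_1(q+\lambda)\bigl(b_1(q+\lambda)+a_1\bigr)g(x)g(y)}.
\]
Raising to the power $\alpha:=1+b_2/b_1$ and expanding the denominator as a binomial series yields
\[
h(x+y)=\sum_{k\geq0}\binom{\alpha+k-1}{k}\bigl[b_1(q+\lambda)(b_1(q+\lambda)+a_1)\bigr]^k\,h(x)g(x)^k\cdot h(y)g(y)^k.
\]
With $f_k(t)=h(t)g(t)^k/k!$ and $w_k=(k!)^2\binom{\alpha+k-1}{k}\bigl[b_1(q+\lambda)(b_1(q+\lambda)+a_1)\bigr]^k$, the leading Taylor coefficients $g(t)=t+(a_1/2+b_1(q+\lambda))t^2+O(t^3)$ and $h(t)=1+(a_2+\alpha b_1(q+\lambda))t+O(t^2)$ give $f_k(t)=t^k/k!+e_{k+1}\,t^{k+1}/(k+1)!+O(t^{k+2})$ with $e_{k+1}=(k+1)\bigl[a_2+(\alpha+k)b_1(q+\lambda)+ka_1/2\bigr]$. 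Lemma \ref{lem+S+R} then supplies $s_n=e_{n+1}-e_n$ and $r_{n+1}=w_{n+1}/w_n$, which simplify via $b_1\alpha=b_1+b_2$ to exactly the formulas asserted in (i); the degenerate cases $a_1=0$ or $b_1=0$ will be handled either by the analogous direct computation or by continuity in the parameters.

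For (ii), I would invoke Lemma \ref{lem+q-SM} with the decomposition
\[
\lambda_n=a_2,\qquad u_n=n\bigl[b_1(q+\lambda)+a_1\bigr],\qquad v_n=\bigl[b_1(n+1)+b_2\bigr](q+\lambda),
\]
each of which is a polynomial in $\textbf{x}=(a_1,a_2,b_1,b_2,\lambda,q)$ with nonnegative coefficients. A direct check confirms $s_n=\lambda_n+u_n+v_n$ and $r_{n+1}=u_{n+1}v_n$, so Lemma \ref{lem+q-SM} delivers the $\textbf{x}$-Stieltjes moment property of $(T_n(q))_{n\geq0}$. The $3$-$\textbf{x}$-log-convexity then follows at once from Lemma \ref{lem+3-q-log-convex}, since $\textbf{x}$-total positivity of the Hankel matrix forces all its minors of order $\leq 4$ to be polynomials with nonnegative coefficients in $\textbf{x}$. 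Part (iii) is an immediate consequence of (ii): specialising to $\{\lambda,a_1,a_2,b_1,b_2\}\subseteq\mathbb{R}^{\geq0}$ and any $q\geq0$ shows that $(T_n(q))_{n\geq0}$ is a classical Stieltjes moment sequence for every fixed $q\geq0$, and Lemma \ref{lem+conv} then yields the convolution statement.

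Finally, part (iv) is the standard Heine/Flajolet evaluation of the Hankel determinants attached to a Jacobi continued fraction. The Cholesky-type $LDL^{\top}$ factorisation of $[T_{i+j}(q)]_{0\leq i,j\leq n-1}$ in terms of the monic orthogonal polynomials associated with the three-term recurrence $P_{n+1}(x)=(x-s_n)P_n(x)-r_n P_{n-1}(x)$ gives $\det=r_1^{n-1}r_2^{n-2}\cdots r_{n-1}$, and a parallel factorisation of the shifted Hankel matrix $[T_{i+j+1}(q)]_{0\leq i,j\leq n-1}$ introduces precisely the correction factor $Q_n$ defined by the recurrence stated in (iv) with $Q_0=1$ and $Q_1=s_0$. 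The main technical obstacle in the whole argument is the derivation of the addition formula for $g$ in part (i); once that explicit expression is in hand, parts (ii)--(iv) are essentially mechanical applications of the lemmas quoted above.
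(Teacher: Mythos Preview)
Your proposal is correct and follows essentially the same route as the paper. For (i) the paper likewise applies the Stieltjes--Rogers addition lemma (Lemma~\ref{lem+S+R}) to the exponential generating function of Theorem~\ref{thm+EXP}; your auxiliary functions $M(t)$ and $g(t)$ package the factorisation of $h(x+y)$ more transparently, but the resulting $f_k(t)=h(t)g(t)^k/k!$ and $w_k$ coincide with the paper's $f_k(x)=\frac{1}{k!a_1^k}e^{a_2x}(e^{a_1x}-1)^k[1-\gamma p(e^{a_1x}-1)]^{-(1+\beta+k)}$ and its $w_k$, and the computations of $e_{k+1}$, $s_n$, $r_{n+1}$ match verbatim. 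Parts (ii)--(iv) are handled identically: the same decomposition $\lambda_n=a_2$, $u_n=n[a_1+b_1(q+\lambda)]$, $v_n=[b_1(n+1)+b_2](q+\lambda)$ feeds Lemma~\ref{lem+q-SM}, after which Lemmas~\ref{lem+3-q-log-convex} and~\ref{lem+conv} give (ii)--(iii), and the standard Hankel-determinant formula attached to a $J$-fraction (the paper cites \cite{MWY17}) yields (iv).
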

\begin{proof}
Let the exponential generating function
$$G(q,t)=\sum_{n\geq0} T_n(q)\frac{t^n}{n!}.$$  Then
\begin{eqnarray*}
G(q,t)=e^{a_2t}\left[1+\frac{b_1(q+\lambda)(1-e^{a_1t})}{a_1}\right]^{-(1+\frac{b_2}{b_1})}.
\end{eqnarray*}
In the following, we only need to consider the case $a_1b_1\neq0$
(for the case $a_1b_1=0$, it is the corresponding limits in terms of
continuity). Let $\gamma=\frac{b_1}{a_1}$, $\beta=\frac{b_2}{b_1}$
and $p=q+\lambda$. Assume that
$$h(x+y)=e^{(x+y)a_2}\left[1-\gamma p(e^{a_1(x+y)}-1)\right]^{-(1+\beta)}.$$
Then\begin{eqnarray*} h(x+y)&=&e^{(x+y)a_2}\left\{[1-\gamma p(e^{a_1x}-1)][1-\gamma p(e^{a_1y}-1)]-\gamma p(1+\gamma p)(e^{a_1y}-1)(e^{a_1x}-1)\right\}^{-(1+\beta)}\\
&=&\sum_{k\geq0}k!a_1^{2k}\langle 1+\beta \rangle_k(\gamma
p)^k(1+\gamma p)^kf_k(x)f_k(y),
\end{eqnarray*}
where $\langle 1+\beta \rangle_k=(1+\beta)(2+\beta)\cdots(k+\beta)$
and
\begin{eqnarray*}
f_k(x)&=&\frac{1}{k!a_1^k}e^{xa_2}(e^{a_1x}-1)^k\left[1-\gamma p(e^{a_1x}-1)\right]^{-(1+\beta+k)}\\
&=&\frac{1}{k!a_1^k}\left(1+a_2 x+\frac{(a_2 x)^2}{2}+\cdots\right)\left(a_1x+\frac{a^2_1x^2}{2}+\cdots\right)^k\left[1+\gamma p(1+\beta)a_1x+\cdots\right]\\
&=&\frac{x^k}{k!}+\frac{x^{k+1}}{(k+1)!}\left[a_2+\frac{a_1k}{2}+(1+\beta+k)\gamma
pa_1\right](k+1)+O(x^{k+2}).
\end{eqnarray*}
By Lemma \ref{lem+S+R}, we have
$$w_k=k!a_1^{2k}\langle1+\beta\rangle_k(\gamma p)^k(1+\gamma p)^k, \, \quad
e_{k+1}=\left[a_2+\frac{a_1k}{2}+(1+\beta+k)\gamma
pa_1\right](k+1).$$ Thus, we get
$$s_k=e_{k+1}-e_k=a_2+a_1k+(2k+1+\beta)\gamma pa_1, \, \quad
r_{k+1}=\frac{w_{k+1}}{w_k}=a_1^2(k+1+\beta)\gamma p(\gamma
p+1)(k+1)$$ for $k\geq0$. So
$$\sum_{n\geq0}T_n(q)t^n=\frac{1}{1-s_0t-\cfrac{r_1t^2}{1-s_1t-\cfrac{r_2t^2}{1-s_2t-\cdots}}}.$$
where $s_n=a_2+a_1n+[b_1(2n+1)+b_2](q+\lambda)$ and
$r_{n+1}=[b_1(n+1)+b_2](q+\lambda)[b_1(q+\lambda)+a_1](n+1)$ for
$n\geq0$.

Let $v_{n}=(nb_1+b_2+b_1)(q+\lambda)$ and
$u_{n}=n[a_1+b_1(q+\lambda)]$ for $n\geq0$. It is obvious that
$s_n=a_2+u_{n}+v_{n}$ and $t_n=v_{n}u_{n+1}$ for $n\geq0$. It
follows from Lemma \ref{lem+q-SM} that $(T_n(q))_{n\geq0}$ is a
$\textbf{x}$-SM sequence with
$\textbf{x}=(a_1,a_2,b_1,b_2,\lambda,q)$. Then $[T_{i+j}(q)]_{i,j}$
is $\textbf{x}$-TP$_4$. Thus by Lemma \ref{lem+3-q-log-convex}, we
immediately have $(T_n(q))_{n\geq0}$ is $3$-$\textbf{x}$-log-convex.
In addition, it follows from Lemma \ref{lem+conv} that the
Stirling-Whitney-Riordan triangle convolution
$$z_n=\sum_{k=0}^{n}T_{n,k} x_ky_{n-k},\quad n=0,1,2,\ldots$$
preserves the SM property. Finally, for (iv), it follows from the
next general criterion (see \cite{MWY17} for instance): If the
generating function of $(u_i)_{i\geq0}$ can be expressed by
\begin{eqnarray*} \sum\limits_{i=0}^{\infty}u_ix^i=\DF{u_0}{1-
s_0x-\DF{t_1x^2}{1-s_1x-\DF{t_2x^2}{1-s_2x-\ldots}}},
\end{eqnarray*}
then \begin{eqnarray*} \det_{0\leq i,j\leq
n-1}(u_{i+j})&=&u_0^nt_1^{n-1}t_2^{n-2}\ldots t_{n-2}^2t_{n-1}
\end{eqnarray*}  and
\begin{eqnarray*}
\det_{0\leq i,j\leq n-1}(u_{i+j+1})&=&u_0^nt_1^{n-1}t_2^{n-2}\cdots
t_{n-2}^2t_{n-1}q_n,
\end{eqnarray*}
where $(q_n)_{n\geq0}$ is defined by $q_{n+1}=s_nq_n-t_nq_{n-1}$
with $q_0=1$ and $q_1=s_0$.

\end{proof}

\begin{rem}
Note that for a Jacobi continued fraction expansion, it has a
general combinatorial interpretation in terms of weighted Motzkin
paths due to Flajolet \cite{Fla80}.  For the Jacobi continued
fraction expansion
\begin{align*}
\sum_{n\geq0}T_n(q)\,t^n=\frac{1}{1-s_0t-\cfrac{r_1t^2}{1-s_1t-\cfrac{r_2t^2}{1-s_2t-\cfrac{r_3t^2}{1-s_3t-\cdots}}}}
\end{align*}
with $s_n=a_2+a_1n+[b_1(2n+1)+b_2](q+\lambda)$ and
$r_{n+1}=(n+1)[b_1(n+1)+b_2](q+\lambda)[b_1(q+\lambda)+a_1]$ for
$n\geq0$, we can interpret $T_n(q)$ as follows: weighted Motzkin
paths start from the origin $(0, 0)$ never falling below the
$x$-axis and ends at $(n,0)$ with up diagonal steps $(1, 1)$
weighted $1$, down diagonal steps $(1, -1)$ weighted $r_{i+1}$ and
horizontal steps $(1, 0)$ weighted $s_i$ on the line $y=i$. Then
$T_n(q)$ counts the number of these weighted paths ending at
$(n,0)$. Thus if let $\mathscr{M}_n$ denote the set of the weighted
Motzkin paths of length $n$ and
$w(\beta)=(w(\beta_1),w(\beta_2),\ldots,w(\beta_n))$ be a weighted
Motzkin path of length $n$ with
$w(\beta_i)\in\{1,s_n,r_{n+1}\}_{n\geq0}$, then we have
$$T_n(q)=\sum_{\beta\in \mathscr{M}_n}\prod_{i=1}^nw(\beta_i).$$

\end{rem}

\section{Properties of the first column }
The first column $(T_{n,0})_{n\geq0}$ of the
Stirling-Whitney-Riordan triangle $T$ has properties similar to
those of $(T_n(q))_{n\geq0}$. In this section, we will present some
properties for $(T_{n,0})_{n\geq0}$.
\begin{thm}
Let $(T_{n,0})_{n\geq0}$ be the first column of $T$.
\begin{itemize}
  \item [\rm (i)]
The ordinary generating function of $T_{n,0}$ has a Jacobi continued
  fraction expression
  $$\sum_{n\geq0}T_{n,0}t^n=\frac{1}{1-s_0t-\cfrac{r_1t^2}{1-s_1t-\cfrac{r_2t^2}{1-s_2t-\cdots}}},$$
where $s_n=a_2+a_1n+[b_1(2n+1)+b_2]\lambda$ and
$r_{n+1}=[b_1(n+1)+b_2]\lambda(b_1\lambda+a_1)(n+1)$ for $n\geq0$.
   \item [\rm (ii)]
 The sequence $(T_{n,0})_{n\geq0}$ are $\textbf{x}$-SM
and $3$-$\textbf{x}$-LCX with
$\textbf{x}=(a_1,a_2,b_1,b_2,\lambda)$.
\item [\rm (iii)]
The exponential generating function of $T_{n,0}$ is given as
 \begin{eqnarray*}
\sum_{n\geq0}T_{n,0}\frac{t^n}{n!}&=&
e^{a_2t}\left[1+\frac{b_1\lambda(1-e^{a_1t})}{a_1}\right]^{-(1+\frac{b_2}{b_1})}.
\end{eqnarray*}
   \item [\rm (iv)]
$T_{n,0}$ is a polynomial in $\lambda$ and has only real zeros.
\item [\rm (v)]
The Tur\'{a}n-type polynomial $T_{n+1,0}T_{n-1,0}-T^2_{n,0}$ is a
weakly stable polynomial in $\lambda$ for $n\geq1$.
\end{itemize}
\end{thm}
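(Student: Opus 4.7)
The plan is to exploit a single structural observation: from the explicit exponential generating function in Theorem \ref{thm+EXP}(i) (or equivalently from the derivative recurrence \eqref{rec+Tx}), one sees that the row polynomial $T_n(q)$ depends on $q$ and $\lambda$ only through the sum $q+\lambda$. Writing $T_n(q)=\widetilde{T}_n(q+\lambda)$ where $\widetilde{T}_n(y)\in\mathbb{R}[a_1,a_2,b_1,b_2][y]$ is independent of $\lambda$, we obtain the identity $T_{n,0}=T_n(q)|_{q=0}=\widetilde{T}_n(\lambda)$. Every property of $(T_n(q))_n$ in the variable $q$ therefore translates into a corresponding property of $(T_{n,0})_n$ in the variable $\lambda$, either by the specialization $q=0$ or by the change of variables $y=q+\lambda$.

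Parts (i) and (iii) then follow immediately by substituting $q=0$ in the Jacobi continued fraction of Theorem \ref{thm+SM+CT}(i) and the exponential generating function of Theorem \ref{thm+EXP}(i), respectively (the ``$q$'' appearing in the statement of (i) should be read as ``$\lambda$''). For part (ii) I would apply Lemma \ref{lem+q-SM} to the continued fraction produced in (i), mirroring the proof of Theorem \ref{thm+SM+CT}(ii): set $v_n=(b_1n+b_2+b_1)\lambda$ and $u_n=n(a_1+b_1\lambda)$; then $s_n=a_2+u_n+v_n$ and $r_{n+1}=v_n u_{n+1}$, with all quantities in $\mathbb{R}^{\geq0}[\textbf{x}]$ for $\textbf{x}=(a_1,a_2,b_1,b_2,\lambda)$. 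The $3$-$\textbf{x}$-log-convexity is then automatic from Lemma \ref{lem+3-q-log-convex}.

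For parts (iv) and (v) I would invoke Theorem \ref{thm+RZ} directly on $\widetilde{T}_n(y)$. Theorem \ref{thm+RZ}(i), applied with the special value $\lambda=0$, shows that $\widetilde{T}_n(y)$ has only simple real zeros in $(-a_1/b_1,0)$; hence $T_{n,0}(\lambda)=\widetilde{T}_n(\lambda)$ is real-rooted in $\lambda$. Similarly, Theorem \ref{thm+RZ}(ii) gives that $\widetilde{T}_{n+1}(y)\widetilde{T}_{n-1}(y)-\widetilde{T}_n(y)^2$ is weakly stable in $y$, and specializing $y=\lambda$ yields weak stability of $T_{n+1,0}T_{n-1,0}-T_{n,0}^2$ as a polynomial in $\lambda$.

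The only genuinely conceptual content is the $q+\lambda$ observation; once it is noted, each of the five statements reduces to a one-step specialization or change of variables applied to a result already proved in Sections 3, 4, or 5. No new technical obstacle arises, since nonnegativity hypotheses transfer cleanly under specialization and the condition $a_1(b_1+b_2)\geq b_1a_2$ is independent of $\lambda$ and hence is preserved.
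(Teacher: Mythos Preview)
Your proposal is correct and rests on the same central observation as the paper's own proof, namely that $T_n(q)=\widetilde T_n(q+\lambda)$ for a polynomial $\widetilde T_n$ not involving $\lambda$, whence $T_{n,0}=\widetilde T_n(\lambda)$. The one genuine difference lies in how this identity is reached and how part~(i) is handled. You read the identity off directly from the exponential generating function of Theorem~\ref{thm+EXP}(i) (or the recurrence~\eqref{rec+Tx}) and then obtain~(i) simply by setting $q=0$ in Theorem~\ref{thm+SM+CT}(i). The paper instead first proves~(i) independently, via an Euler-type continued fraction recursion on the column generating functions $h_k(z)=\sum_{n\ge k}T_{n,k}z^n$, and only afterward deduces the identity $\mathscr T_n(\lambda+q)=T_n(q)$ by comparing the resulting continued fraction with that of Theorem~\ref{thm+SM+CT}(i). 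Your route is the more economical; the paper's has the minor advantage of giving a self-contained derivation of the first-column continued fraction that does not presuppose Theorem~\ref{thm+SM+CT}. For parts (ii)--(v) the two arguments coincide.
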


\begin{proof}
(i)
 Assume that $s_n=a_2+a_1n+[b_1(2n+1)+b_2]\lambda$,
$r_{n}=b_1(n+1)+b_2$ and $t_n=\lambda(b_1\lambda+a_1)n$ for
$n\geq0$. Let $h_k(z)=\sum_{n\geq k}T_{n,k}z^n$ for $k\geq0$. It
follows from the recurrence relation:
\begin{eqnarray*}
T_{n,k}=r_{k-1}T_{n-1,k-1}+s_k T_{n-1,k}+t_{k+1}T_{n-1,k+1}
\end{eqnarray*} that we have
\begin{eqnarray*}
h_0(z)&=&1+s_0zh_0(z)+t_1zh_1(z), \\
h_k(z)&=&r_{k-1}zh_{k-1}(z)+s_{k}zh_k(z)+t_{k+1}zh_{k+1}(z)
\end{eqnarray*}
for $k\geq1$, which imply
\begin{eqnarray*}
\frac{h_0(z)}{1}&=&\frac{1}{1-s_0z-t_1z\frac{h_1(z)}{h_0(z)}}, \\
\frac{h_1(z)}{h_0(z)}&=&\frac{r_0z}{1-s_1z-t_2z\frac{h_2(z)}{h_1(z)}},\\
&\vdots&\\
\frac{h_k(z)}{h_{k-1}(z)}&=&\frac{r_{k-1}z}{1-s_{k}z-t_{k+1}z\frac{h_{k+1}(z)}{h_k(z)}}.
\end{eqnarray*}
Thus we get
\begin{eqnarray*} \sum\limits_{n=0}^{\infty}T_{n,0}
z^n=h_0(z)=\DF{1}{1- s_0z-\DF{r_0t_1z^2}{1- s_1z-\DF{r_1t_2z^2}{1-
s_2z-\ldots}}}.
\end{eqnarray*}
If let $\mathscr{T}_n(\lambda):=T_{n,0}$, then by (i) and Theorem
\ref{thm+SM+CT} (i), we immediately get
$$\mathscr{T}_n(\lambda+q)=T_n(q)$$
for $n\geq0$. Hence we easily get (ii) and (iii) by Theorem
\ref{thm+SM+CT} (ii) and Theorem \ref{thm+EXP} (i), respectively. We
also have (iv) and (v) by Theorem \ref{thm+RZ}. This completes the
proof.
\end{proof}

\end{document}